\let\OLDthebibliography\thebibliography
\renewcommand\thebibliography[1]{
  \OLDthebibliography{#1}
  \setlength{\parskip}{1pt}
  \setlength{\itemsep}{0pt plus 0.0ex}
}
\def\numberlikeadb{\global\def\theequation{\thesection.\arabic{equation}}}
\newtheorem{theorem}{Theorem}[section]
\newtheorem{lemma}[theorem]{Lemma}
\newtheorem{corollary}[theorem]{Corollary}
\newtheorem{remark}[theorem]{Remark}
\begin{document}

\title{On P\'olya's random walk constants\footnote{Dedicated to the 130th anniversary of Lauricella functions.}}

\author{Robert E. Gaunt\footnote{Department of Mathematics, The University of Manchester, Oxford Road, Manchester M13 9PL, UK, robert.gaunt@manchester.ac.uk; Saralees.Nadarajah@manchester.ac.uk},\: Saralees Nadaraja$\mathrm{h}^{*}$\ and Tibor K. Pog\'any\footnote{Institute of Applied Mathematics, \'Obuda University, 1034 Budapest, Hungary, pogany.tibor@nik.uni-obuda.hu}\footnote{Faculty of Maritime Studies, University of Rijeka, 51000 Rijeka, Croatia, tibor.poganj@uniri.hr}}

\date{} 
\maketitle

\vspace{-5mm}

\begin{abstract} 
A celebrated result in probability theory is that a simple symmetric random walk on the $d$-dimensional lattice $\mathbb{Z}^d$ is recurrent for $d=1,2$ and transient for $d\geq 3$. In this note, we derive a closed-form expression, in terms of the Lauricella function $F_C$, for the return probability for all $d\geq3$. Previously, a closed-form formula had only been available for $d=3$.
\end{abstract}

\noindent{{\bf{Keywords:}}} Random walk; return probability; P\'olya's random walk constants; Lauricella function; Watson's triple integrals; Laplace transform

\noindent{{{\bf{AMS 2010 Subject Classification:}}} Primary 60G50; 33C65}

\section{Introduction}

Let $p(d)$ be the probability that a simple symmetric random walk on the $d$-dimensional lattice $\mathbb{Z}^d$ returns to origin, for $d\geq1$. A celebrated result of P\'olya \cite{p21} states that $p(1)=p(2)=1$ but $p(d)<1$ for $d\geq3$. An explicit formula is available in the three-dimensional case:
\[p(3)=1-1/u(3)=0.3405373296\ldots,\]
where
\begin{align}
u(3)&=\frac{3}{(2\pi)^3}\int_{-\pi}^\pi\!\int_{-\pi}^\pi\!\int_{-\pi}^\pi\frac{ \mathrm{d}x\, \mathrm{d}y\, \mathrm{d}z}{3-\cos x-\cos y-\cos z} \label{wat}\\
&=\frac{\sqrt{6}}{32\pi^3}\Gamma\Big(\frac{1}{24}\Big)\Gamma\Big(\frac{5}{24}\Big)\Gamma\Big(\frac{7}{24}\Big)\Gamma\Big(\frac{11}{24}\Big) \label{gammas}\\
&=1.5163860592\ldots \nonumber
\end{align}
(see \cite{d54,gz77,mw40,watson}). The integral in (\ref{wat}) is one of Watson's triple integrals \cite{watson} up to a multiplicative factor.

Closed-form expressions for the case $d\geq4$ are not available to date in the literature, although numerical values are reported in \cite{f03,m56}, asymptotic expansions as $d\rightarrow\infty$ are given in \cite{j01}, and an integral representation was obtained by \cite{m56}: for $d\geq3$,
   \begin{equation} \label{FY}
	    p(d)=1-1/u(d),
	 \end{equation}
where
\begin{align}
u(d)&=\int_{(-\pi,\pi)^d} \bigg(d-\sum_{k=1}^d \cos x_k\bigg)^{-1} \, \mathrm{d}x_1\, \mathrm{d}x_2\cdots\, \mathrm{d}x_d \label{wat2}\\
&=\int_0^\infty \left[ I_0 \left( \frac {x}{d} \right) \right]^d  \mathrm{e}^{-x}  \, \mathrm{d}x, \label{const}
\end{align}
 with  $I_0 (\cdot)$ denoting the
modified Bessel function of the first kind of order zero, defined by
\begin{align}
I_0 (x) = \sum_{k \geq 0} \frac1{\left( k! \right)^2} \left( \frac {x}{2} \right)^{2k}.
\label{series}
\end{align}
The integral in (\ref{wat2}) is a $d$-fold integral generalisation of the Watson triple integral (\ref{wat}) (again, up to a 
multiplicative factor). Note that the integral (\ref{const}) is not convergent for $d=1,2$, which is easily seen from the limiting 
form $I_0(x)\sim \mathrm{e}^x/\sqrt{2\pi x}$, $x\rightarrow\infty$ (see \cite{olver}). 

In this note, we derive a closed-form expression for the return probability $p(d)$ for any positive integer $d\geq3$.
The expression involves the Lauricella function $F_C$ (see \cite{e78,l93}), defined by
\begin{align}
\displaystyle
F_C^{(d)} \left( a, b; c_1, \ldots, c_d; x_1, \ldots, x_d \right)  =
\sum_{k_1 \geq 0} \cdots \sum_{k_d \geq 0}
\frac {(a)_{k_1 + \cdots + k_d} (b)_{k_1 + \cdots + k_d}}
{\left( c_1 \right)_{k_1} \cdots \left( c_d \right)_{k_d}}
\frac {x_1^{k_1} \cdots x_d^{k_d}}
{k_1! \cdots k_d!},
\label{hyp}
\end{align}
where $(f)_k = f (f + 1) \cdots (f + k - 1) = \Gamma(f+k)/\Gamma(f)$ denotes the ascending factorial or the Pochhammer symbol. Numerical routines for the direct computation of (\ref{hyp}) are available; see, for instance, the \emph{Mathematica}-based  routine presented in \cite{Bytev}. 

\section{Closed-form expression for the return probability}

Our main result is the following.

\begin{theorem}
For any positive integer $d\geq3$,
   \begin{equation} \label{FX0}
      u(d) = F_C^{(d)} \left( 1, \frac12; 1, \ldots, 1; \frac {1}{d^2}, \ldots, \frac {1}{d^2} \right).
   \end{equation}
\end{theorem}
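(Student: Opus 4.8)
The plan is to start from the integral representation in (\ref{const}), which the excerpt permits me to assume, and reduce the Theorem to showing that
\[
\int_0^\infty \left[ I_0 \left( \frac{x}{d} \right) \right]^d \mathrm{e}^{-x} \, \mathrm{d}x = F_C^{(d)} \left( 1, \frac12; 1, \ldots, 1; \frac{1}{d^2}, \ldots, \frac{1}{d^2} \right).
\]
The natural first step is to expand the $d$-fold product of Bessel functions using the power series (\ref{series}). Writing each factor $I_0(x/d) = \sum_{k \geq 0} (x/d)^{2k} / (k!)^2 (2)^{-2k}$ and multiplying out, the product $[I_0(x/d)]^d$ becomes a $d$-fold sum $\sum_{k_1,\dots,k_d \geq 0}$ with general term proportional to $x^{2(k_1+\cdots+k_d)}$, where the $x$-dependence is isolated from the combinatorial coefficients. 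The coefficient of the monomial carries a factor $\prod_j (k_j!)^2 4^{k_j} d^{2k_j}$ in the denominator.

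Next I would integrate term by term against $\mathrm{e}^{-x}$. Each term reduces to a Gamma integral $\int_0^\infty x^{2K} \mathrm{e}^{-x}\, \mathrm{d}x = \Gamma(2K+1) = (2K)!$, where $K = k_1 + \cdots + k_d$. The key algebraic manipulation is then to rewrite $(2K)!$ using the duplication formula for the Gamma function, namely $(2K)! = 4^K K! \, (1/2)_K / (1/2)_0$ — equivalently $\Gamma(2K+1) = \frac{4^K}{\sqrt\pi} \Gamma(K + \tfrac12)\Gamma(K+1)$, which converts $(2K)!$ into the product of Pochhammer symbols $(1)_K (1/2)_K 4^K$ that matches the numerator of (\ref{hyp}) with $a=1$, $b=1/2$. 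The factor $4^K = \prod_j 4^{k_j}$ cancels exactly against the $4^{k_j}$ in the denominators produced by the Bessel expansion, and one factor of $k_j!$ survives to pair with the $(c_j)_{k_j} = (1)_{k_j} = k_j!$ slot, leaving the residual $1/(d^{2k_j})$ as the argument $x_j = 1/d^2$ of the Lauricella series. Matching index-by-index against (\ref{hyp}) with all $c_j = 1$ then yields the claimed $F_C^{(d)}$.

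The main obstacle is not the formal term-by-term matching, which is essentially bookkeeping with the Legendre duplication formula, but rather the \emph{justification of interchanging summation and integration}. Since the integrand $[I_0(x/d)]^d \mathrm{e}^{-x}$ behaves like $\mathrm{e}^{(d \cdot \frac{1}{d} - 1)x} = \mathrm{e}^0$ up to polynomial corrections as $x \to \infty$ (using $I_0(x)\sim \mathrm{e}^x/\sqrt{2\pi x}$), the integral converges only marginally for small $d$, so I must argue carefully that the Tonelli/Fubini swap is legitimate. The cleanest route is to note that all terms in the Bessel expansion are nonnegative, so Tonelli's theorem applies unconditionally to the double (series-integral) object, and the resulting Lauricella series converges precisely because the original integral (\ref{const}) is finite for $d \geq 3$; I would verify that the radius-of-convergence condition for $F_C$, namely $\sum_j \sqrt{|x_j|} = d \cdot (1/d) = 1$, sits exactly on the boundary, and confirm convergence there via the known asymptotics of the summand. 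Establishing this boundary convergence cleanly is the delicate point that deserves the most care.
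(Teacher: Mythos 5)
Your proposal matches the paper's proof essentially step for step: expand $[I_0(x/d)]^d$ via the power series (\ref{series}), integrate term by term to get $\Gamma(2k_1+\cdots+2k_d+1)$, apply the Legendre duplication formula, and match Pochhammer symbols against the definition (\ref{hyp}) with $a=1$, $b=\tfrac{1}{2}$, $c_j=1$, $x_j=d^{-2}$. Your Tonelli justification of the sum--integral interchange (nonnegative terms, with finiteness of (\ref{const}) for $d\geq3$ forcing convergence of the $F_C$ series on the boundary $\sum_j \sqrt{x_j}=1$) is a point the paper passes over silently, and it is correct as you state it.
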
 

\begin{proof}
Using (\ref{series}), we can write (\ref{const}) as
\begin{align}
\displaystyle
u (d)
&=
\displaystyle
\int_0^\infty \left[ \sum_{k \geq 0} \frac1{\left( k! \right)^2} \left( \frac {x}{2 d} \right)^{2k} \right]^d  \mathrm{e}^{-x} \,  \mathrm{d}x
\nonumber
\\
&=
\displaystyle
\int_0^\infty \sum_{k_1 \geq 0} \cdots \sum_{k_d \geq 0}
\frac {1}{\left( k_1! \cdots k_d! \right)^2} \left( \frac {x}{2 d} \right)^{2k_1 + \cdots + 2k_d}   \mathrm{e}^{-x} \,  \mathrm{d}x
\nonumber
\\
&=
\displaystyle
\sum_{k_1 \geq 0} \cdots \sum_{k_d \geq 0}
\frac {1}{\left( k_1! \cdots k_d! \right)^2 \left( 2 d \right)^{2k_1 + \cdots + 2k_d}}
\int_0^\infty  x^{2k_1 + \cdots + 2k_d}   \mathrm{e}^{-x}  \, \mathrm{d}x
\nonumber
\\
&=
\displaystyle
\sum_{k_1 \geq 0} \cdots \sum_{k_d \geq 0}
\frac {1}{\left( k_1! \cdots k_d! \right)^2 \left( 2 d \right)^{2k_1 + \cdots + 2k_d}}
\Gamma\left( 2k_1 + \cdots + 2k_d + 1 \right).
\label{tt}
\end{align}
Using the duplication formula for the gamma function, (\ref{tt}) can be written  as
\begin{align}
\displaystyle
u (d)
&=
\displaystyle
\frac1{\sqrt{\pi}} \sum_{k_1 \geq 0} \cdots \sum_{k_d \geq 0}
\frac {1}{\left( k_1! \cdots k_d! \right)^2 d^{2k_1 + \cdots + 2k_d}}
\Gamma\left( k_1 + \cdots + k_d + \frac  {1}{2} \right)
\Gamma\left( k_1 + \cdots + k_d + 1 \right)
\nonumber
\\
&=
\displaystyle
\sum_{k_1 \geq 0} \cdots \sum_{k_d \geq 0}
\frac {(1)_{k_1 + \cdots + k_d}  \left( \frac {1}{2} \right)_{k_1 + \cdots + k_d}}
{(1)_{k_1} \cdots (1)_{k_d} k_1! \cdots k_d! d^{2k_1 + \cdots + 2k_d}}.
\nonumber
\end{align}
Now \eqref{FX0} follows from the definition in \eqref{hyp}.
\end{proof} 

\begin{remark}
The return probability \eqref{FY} becomes
   \[ p(d) = 1-\bigg[F_C^{(d)} \left( 1, \frac12; 1, \ldots, 1; \frac {1}{d^2}, \ldots, \frac {1}{d^2} \right)\bigg]^{-1}\,,\]
for all positive integers $d\geq3$.
\end{remark}

\begin{remark} By expressing the return probability $p(d)$ for any positive integer $d\geq3$ in terms of the Lauricella function $F_C$, which is a well-studied special function with in-built numerical routines for direct computation, following standard conventions within the special functions literature, our formula {\rm{(\ref{FX0})}} can be endowed with the label ``closed-form," as has been done in works such as {\rm{\cite{ieee}}}. This is in contrast to the integral representations {\rm{(\ref{wat2})}} and {\rm{(\ref{const})}}, which until now have not been evaluated in terms of known special functions. It is common practice to evaluate integrals in terms of Lauricella functions; see, for example, the standard reference {\rm{\cite{Prudnikov4}}} (a corrected example of such a formula is given in Lemma {\rm{\ref{lem1}}}).
\end{remark}

\begin{corollary}The following reduction formula holds:
    \begin{align}
        F_C^{(3)} \left( 1, \frac12; 1, 1, 1; \frac {1}{9}, \frac{1}{9}, \frac {1}{9} \right)=\frac{\sqrt{6}}{32\pi^3}\Gamma\Big(\frac{1}{24}\Big)\Gamma\Big(\frac{5}{24}\Big)\Gamma\Big(\frac{7}{24}\Big)\Gamma\Big(\frac{11}{24}\Big). \label{red}
    \end{align}
\end{corollary}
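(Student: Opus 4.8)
The plan is to read both sides of \eqref{red} as two different closed forms for one and the same quantity, the return‑probability constant $u(3)$, and to conclude by transitivity. No fresh computation should be needed beyond what the paper has already assembled: the left‑hand side is delivered by the main Theorem, and the right‑hand side is the classical Watson evaluation quoted in the Introduction.

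Concretely, I would proceed in two short steps. First, specialise the Theorem to $d=3$: equation \eqref{FX0} with $d=3$ asserts $u(3)=F_C^{(3)}\bigl(1,\tfrac12;1,1,1;\tfrac19,\tfrac19,\tfrac19\bigr)$, so the left‑hand side of \eqref{red} is exactly $u(3)$. Second, invoke the chain \eqref{wat}--\eqref{gammas}, which records the classical closed form $u(3)=\frac{\sqrt6}{32\pi^3}\Gamma(\tfrac1{24})\Gamma(\tfrac5{24})\Gamma(\tfrac7{24})\Gamma(\tfrac{11}{24})$, i.e.\ the right‑hand side of \eqref{red}. Equating the two representations of $u(3)$ yields \eqref{red} immediately.

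The one point deserving attention, and the step I would flag most carefully, is that the Theorem is established from the Laplace‑transform representation \eqref{const}, whereas the gamma‑product value is classically attached to the lattice integral \eqref{wat}; one must confirm that these name the same number when $d=3$. This is precisely the identity underlying \eqref{const}: using $\bigl(d-\sum_k\cos x_k\bigr)^{-1}=\int_0^\infty \mathrm{e}^{-t(d-\sum_k\cos x_k)}\,\mathrm{d}t$ together with $\int_{-\pi}^\pi \mathrm{e}^{t\cos x}\,\mathrm{d}x=2\pi I_0(t)$ and the rescaling $x=td$, the prefactor $3/(2\pi)^3$ in \eqref{wat} is exactly what converts that lattice integral into \eqref{const} for $d=3$. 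Since both sides of \eqref{red} therefore evaluate to the single constant $u(3)=1.5163860592\ldots$, the corollary follows; I expect no genuine obstacle beyond this bookkeeping of normalising constants.
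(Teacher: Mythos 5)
Your proof is correct and matches the paper's own argument, which is literally the one-liner ``combine (\ref{gammas}) and (\ref{FX0})'': specialise the Theorem to $d=3$ and equate with the classical Watson/Glasser--Zucker evaluation of $u(3)$. Your extra check that the lattice integral (\ref{wat}) and the Bessel--Laplace representation (\ref{const}) agree (via $\int_{-\pi}^{\pi}\mathrm{e}^{t\cos x}\,\mathrm{d}x=2\pi I_0(t)$ and the rescaling $x=td$) is sound bookkeeping that the paper delegates to the cited literature \cite{m56}, so it adds rigour without changing the route.
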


\begin{proof}
    Combine (\ref{gammas}) and (\ref{FX0}).
\end{proof}

\begin{remark}
{\rm 1.} The reduction formula \eqref{red} appears to be new. We could not locate it in standard references such as 
{\rm \cite{sk85}}.    

\vspace{2mm}

\noindent {\rm 2.} We were unable to obtain a further simplification of \eqref{FX0} for $d\geq4$ from reduction formulas 
for Lauricella functions in standard references such as {\rm \cite{sk85}}. However, we cannot not rule out this possibility,  
especially in the light of  the fact that we could not locate  \eqref{red} in the existing literature.
\end{remark}

The direct Laplace transform \cite[p. 346, Eq. 3.15.16.35]{Prudnikov4} turns out to be erroneous. Here we give its corrected form. On specifying $\lambda=\nu_j=0$, $a_j = d^{-1}$ and $p=1$ in \eqref{FX1} below we arrive at \eqref{FX0}. Recall that the modified Bessel function of the first kind of order $\nu\in\mathbb{R}$ is defined for $x\in\mathbb{R}$ by the power series
\begin{align}
I_ \nu(x) = \sum_{k \geq 0} \frac1{ k! \Gamma(\nu+k+1)} \left( \frac {x}{2} \right)^{\nu+2k}. \label{series2}
\end{align}

\begin{lemma}\label{lem1} Denote $\nu = \sum_{j=1}^d \nu_j$, where $d$ is a positive integer. Let $\Re(\lambda) + \nu>-1$ and $\nu_j>-1$; $j = 1,\ldots,d$. Let $a_1,\ldots,a_d>0$. 
Then, the Laplace transform 
   \begin{align} \label{FX1}
	    \mathscr L_p\Big[ x^\lambda \prod\limits_{j=1}^d I_{\nu_j}(a_j x)\Big]\, {\rm d}x 
			     &= \frac{\Gamma(\lambda + \nu +1)}{2^\nu p^{\lambda+\nu+1}} \bigg\{\prod_{j=1}^d \frac{a_j^{\nu_j}}{\Gamma(\nu_j+1)}\bigg\} \notag \\
			     &\quad \cdot F_C^{(d)} \Big( \frac{\lambda+\nu+1}2, \frac{\lambda+\nu}2+1; \nu_1+1, 
			        \dots,\nu_d+1; \frac{a_1^2}{p^2}, \dots, \frac{a_d^2}{p^2} \Big)\,,
	 \end{align}
provided $p>\sum_{j=1}^d a_j$, or $p=\sum_{j=1}^d a_j$ and $\Re(\lambda)<d/2-1$.
\end{lemma}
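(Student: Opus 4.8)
The plan is to mirror the computation in the proof of the Theorem, which is recovered as the special case $\lambda = \nu_j = 0$, $a_j = 1/d$, $p = 1$. First I would insert the power series \eqref{series2} for each factor $I_{\nu_j}(a_j x)$ into the integrand, turning $x^\lambda \prod_{j=1}^d I_{\nu_j}(a_j x)\mathrm{e}^{-px}$ into a $d$-fold sum over $k_1, \ldots, k_d \geq 0$ whose general term carries $x^{\lambda + \nu + 2K}\mathrm{e}^{-px}$ with $K = k_1 + \cdots + k_d$. I would then interchange the $d$-fold summation with the integral and evaluate each resulting gamma integral, $\int_0^\infty x^{\lambda + \nu + 2K}\mathrm{e}^{-px}\,\mathrm{d}x = \Gamma(\lambda + \nu + 2K + 1)/p^{\lambda + \nu + 2K + 1}$. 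Legendre's duplication formula splits $\Gamma(\lambda + \nu + 2K + 1)$ into $\Gamma\big(\tfrac{\lambda+\nu+1}{2} + K\big)\,\Gamma\big(\tfrac{\lambda+\nu}{2} + 1 + K\big)$ up to an explicit power of $2$; writing each gamma factor as $\Gamma(\alpha + K) = \Gamma(\alpha)(\alpha)_K$, and likewise $\Gamma(\nu_j + k_j + 1) = \Gamma(\nu_j + 1)(\nu_j + 1)_{k_j}$ in the denominators, produces precisely the upper parameters $\tfrac{\lambda+\nu+1}{2}$, $\tfrac{\lambda+\nu}{2}+1$ and the lower parameters $\nu_j + 1$ of $F_C^{(d)}$. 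A short bookkeeping of the powers of $2$ and $p$ collapses the argument attached to $k_j$ to $(a_j^2/p^2)^{k_j}/k_j!$, and running the duplication formula backwards at $K=0$ turns the collected constant $\Gamma(\tfrac{\lambda+\nu+1}{2})\Gamma(\tfrac{\lambda+\nu}{2}+1)/\sqrt{\pi}$ into $\Gamma(\lambda+\nu+1)/2^{\lambda+\nu}$, yielding the stated prefactor $\Gamma(\lambda+\nu+1)\,2^{-\nu}p^{-(\lambda+\nu+1)}\prod_{j=1}^d a_j^{\nu_j}/\Gamma(\nu_j+1)$. Matching against \eqref{hyp} then gives \eqref{FX1}.

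The one genuinely delicate step is justifying the interchange of summation and integration, and this is exactly where the stated hypotheses enter. For $\Re(\lambda) + \nu > -1$ and $\nu_j > -1$ the integrand is absolutely integrable near $0$, since $I_{\nu_j}(a_j x) \sim (a_j x/2)^{\nu_j}/\Gamma(\nu_j+1)$ makes the integrand behave like $x^{\Re(\lambda)+\nu}$ there. Near infinity I would use $I_{\nu_j}(a_j x) \sim \mathrm{e}^{a_j x}/\sqrt{2\pi a_j x}$, so the integrand decays like $x^{\Re(\lambda) - d/2}\mathrm{e}^{(\sum_j a_j - p)x}$; this is integrable outright when $p > \sum_j a_j$, and in the boundary case $p = \sum_j a_j$ precisely when $\Re(\lambda) < d/2 - 1$. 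Throughout the interior $p > \sum_j a_j$ the cleanest route is Tonelli applied to the series of absolute values (all terms are nonnegative once $x^\lambda$ is replaced by $x^{\Re(\lambda)}$), which both legitimises the term-by-term integration and shows that the resulting $F_C^{(d)}$ series converges, its argument point satisfying $\sum_j \sqrt{a_j^2/p^2} = \sum_j a_j/p < 1$ and so lying inside the Horn domain of convergence.

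I therefore expect the main obstacle to be the boundary case $p = \sum_j a_j$, where the argument point of $F_C^{(d)}$ sits on the boundary $\sum_j \sqrt{x_j} = 1$ of its domain of convergence and the Tonelli argument no longer applies directly. The natural remedy is an Abel-type continuity argument: establish the identity throughout the interior $p > \sum_j a_j$ as above, then let $p \downarrow \sum_j a_j$. On the left, the integrability criterion $\Re(\lambda) < d/2 - 1$ coming from the asymptotics above lets one pass to the limit under the integral by dominated convergence; on the right, one invokes continuity of $F_C^{(d)}$ up to the boundary point, which requires the boundary series itself to converge. Verifying that $\Re(\lambda) < d/2 - 1$ is exactly the condition making both the boundary integral and the boundary value of $F_C^{(d)}$ converge — so that the Abel limit is legitimate on both sides — is the crux, and I would cross-check it against the convergence theory for Lauricella $F_C$ on its boundary; this same analysis also supplies the hypotheses that the erroneous reference \cite{Prudnikov4} omitted.
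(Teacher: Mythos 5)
Your proposal follows essentially the same route as the paper's own proof: expand each $I_{\nu_j}(a_j x)$ via \eqref{series2}, integrate the resulting $d$-fold series term by term against $x^\lambda \mathrm{e}^{-px}$, and apply the Legendre duplication formula twice (once to split $\Gamma(\lambda+\nu+2K+1)$, once in reverse to recombine the constant into $\Gamma(\lambda+\nu+1)$), with the paper likewise justifying the hypotheses only through the asymptotic $I_\nu(x)\sim \mathrm{e}^x/\sqrt{2\pi x}$. One small simplification over your plan: the Abel-continuity detour for the boundary case $p=\sum_j a_j$ is unnecessary, since Tonelli (applied with $x^{\Re(\lambda)}$, then Fubini for complex $\lambda$) works directly there as well --- the series terms are nonnegative and the integral itself converges precisely when $\Re(\lambda)<d/2-1$, so term-by-term integration is legitimate at the boundary point too.
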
  

\begin{proof} The conditions $p>\sum_{j=1}^d a_j$, or $p=\sum_{j=1}^d a_j$ and $\Re(\lambda)<d/2-1$ are required to ensure that the integral in the Laplace transform is convergent; this is easily seen from the limiting form $I_\nu(x)\sim \mathrm{e}^x/\sqrt{2\pi x}$, $x\rightarrow\infty$ (see \cite{olver}).

Applying the power series definition (\ref{series2}) of the function $I_\nu(x)$, denoting $\boldsymbol n = 
(n_1,\ldots,n_d)$ and $n = \sum_{j=1}^d n_j$, we conclude by the Legendre duplication formula (twice) that
   \begin{align*}
      \mathscr L_p\Big[ x^\lambda \prod\limits_{j=1}^d & I_{\nu_j}(a_j x)\Big]\, {\rm d}x 
			      = \int_0^\infty {\rm e}^{-px} x^\lambda \prod_{j=1}^d I_{\nu_j}(a_j x)\,{\rm d}x \\
					 &= \sum_{\boldsymbol n \geq 0} \prod_{j=1}^d \frac{\left(\frac{a_j}2\right)^{2n_j+\nu_j}}{\Gamma(n_j+\nu_j+1)\,n_j!} 
					    \int_0^\infty {\rm e}^{-p x} x^{\lambda+2n+\nu+1} \,{\rm d}x \\
					 &= \sum_{\boldsymbol n \geq 0} \prod_{j=1}^d \frac{\left(\frac{a_j}2\right)^{2n_j+\nu_j}}{\Gamma(n_j+\nu_j+1)\,n_j!} 
					    \frac{\Gamma(\lambda + 2n + \nu + 1)}{p^{\lambda + 2n + \nu + 1}} \\
					 &= \frac{2^\lambda}{\sqrt{\pi} p^{\lambda+\nu+1}} \prod_{j=1}^d \frac{a_j^{\nu_j}}{\Gamma(\nu_j+1)} 
					    \sum_{\boldsymbol n \geq 0} \Gamma\Big(\frac{\lambda+\nu+1}2+ n\Big) 
							\Gamma\Big( \frac{\lambda+\nu}2+1+ n\Big) \prod_{j=1}^d \frac{(a_j^2/p^2)^{n_j}}
							{(\nu_j+1)_{n_j}\,n_j!} \\
					 &= \frac{\Gamma(\lambda+\nu+1)}{2^\nu p^{\lambda+\nu+1}} \prod_{j=1}^d \frac{a_j^{\nu_j}}{\Gamma(\nu_j+1)} 
					    \sum_{\boldsymbol n \geq 0} \left(\frac{\lambda+\nu+1}2\right)_n \left(\frac{\lambda+\nu}2+1\right)_n 
							\prod_{j=1}^d \frac{(a_j^2/p^2)^{n_j}}{(\nu_j+1)_{n_j}\,n_j!} ,  
	 \end{align*}
which is equivalent to the statement. 
\end{proof} 

\section*{Acknowledgements} We would like to thank the referee for their constructive comments.

\footnotesize

\end{document}